\def\R{\mathbb R}
\def\N{\mathbb N}
\def\rS{\mathbb S}
\def\cal{\mathcal}
\def\E{{\cal E}}
\def\cE{\E}
\def\cG{{\cal G}}
\def\cS{\cal S}
\def\H{{\cal H}}
\def\a{\alpha}
\def\om{\omega}
\def\Om{\Omega}
\def\La{\Lambda}
\def\S{\Sigma}
\def\ov{\overline}
\def\loc{{\rm loc}}
\def\Div{{\rm div}\,}
\def\di{\, \mathrm{d}}
\def\ca{\mathbbmss{1}}
\def\00{{\bf 0}}
\newcommand{\setsep}{\;;\;}
\newcommand\Ccinfty{C_c^\infty}
\newcommand{\bd}{\partial}
\DeclareMathOperator*{\esssup}{ess\,\,sup}
\DeclareMathOperator{\Per}{Per}
\DeclareMathOperator{\gen}{gen}
\def\big{\bigskip}
\newtheorem{theorem}{Theorem}[section]
\newtheorem{corollary}[theorem]{Corollary}
\newtheorem{proposition}[theorem]{Proposition}
\newtheorem{lemma}[theorem]{Lemma}
\theoremstyle{definition}
\newtheorem{remark}[theorem]{Remark}
\numberwithin{equation}{section}
\numberwithin{figure}{section}
\providecommand{\comments}{1}        % comments     0 = off, 1 = on 
\newcommand{\commentz}[5]
    {\textcolor{#4}{#1{#3 #5} #2}}}
\newcommand{\commentz}[5]{}}
\providecommand{\changes}{1}            % changes      0 = off, 1 = on
\newcommand{\bchange}[2] {\color{#2} #1 }}  
\newcommand{\bchange}[2]{}}
\newcommand{\echange}[1]{#1\normalcolor}}
\newcommand{\echange}[1]{}}
\providecommand{\details}{1}            % details      0 = off, 1 = on
\newcommand{\detail}[1]{\par{\footnotesize {\sf Detail}: #1\par\noindent}}}
\newcommand{\detail}[1]{}}
\author{M. Caroccia}
\address{Center for Nonlinear Analysis, Carnegie Mellon University, 5000 Forbes Avenue, Pittsburgh (PA), USA}
\email{caroccia.marco@gmail.com}
\author{S. Littig}
\address{Mathematical Institute, University of Cologne, Weyertal 86-90, 50931 Cologne, Germany}
\email{slittig@math.uni-koeln.de}
\title{The Cheeger-N-problem in terms of BV-functions}
\begin{document}

\begin{abstract}
We reformulate the Cheeger $N$ partition problem as a minimization among a suitable class of $BV$ functions. This allows us to obtain a new existence proof for the Cheeger-$N$-problem. Moreover, we derive some connections between the Cheeger-$2$-problem and the second eigenvalue of the $1$-Laplace operator.
\end{abstract}

\maketitle

\section{Introduction and Notation}

Let $\Omega\subseteq\R^n$ be open and bounded. The classical Cheeger problem is given by
\begin{equation}\label{eqn: cheeger pb}
H_1(\Om):=\inf \frac{\Per(E)}{|E|}
\end{equation}
where the infimum is taken among all measurable sets $E\subseteq\Omega$. Here $|E|$ denotes the Lebesgue measure of $E$ and
\[
\Per(E)=\sup \left\{\int_E\Div \varphi\di x\setsep \varphi\in \Ccinfty(\R^n,\R^n), |\varphi|\le 1 \right\}
\]
denotes the distributional Perimeter of $E$ in $\R^n$. Obviously, by the Gauss-Green theorem, for bounded sets $E$ with smooth boundary the quantity $\Per(E)$ coincides with $\H^{n-1}(\partial E)$.  It can be shown that, by an easy application of the compactness Theorem for sets of finite perimeter (\cite{maggibook}), the existence of a minimizer for the problem \eqref{eqn: cheeger pb} is guaranteed for any bounded set $\Om$. Any set $E\subset \Om$ such that $H_1=P(E)/|E|$ is called a \textit{Cheeger set} of $\Om$, and the quantity $H_1$ is known as the \textit{Cheeger constant} of $\Om$. This problem gained a lot of interest in the past and the properties of Cheeger sets have been studied in various context (\cite{Leo15},\cite{Pa11},\cite{LP14}, \cite{LeoNeuSar17},\cite{alter2009uniqueness},\cite{KL06}). Remarkably the Cheeger problem can be stated in an eigenvalue problem of the $1$-Laplace operator, namely the variational problem
\[
\Lambda_1:=\inf \left\{\int_{\R^n}\di |Du|\setsep \int_{\R^n} |u|\di x=1\right\}
\]
where the infimum is taken among all functions of bounded variation with $L^1(\Omega)$-support in $\Omega$ almost everywhere.\\

A generalization of the Cheeger problem, the so called Cheeger $N$-problem, was introduced and studied  in \cite{Caroccia2017} and can be stated as follows
	\begin{equation}\label{H_N quantity}
	H_N(\Om):= \inf\left\{\sum_{i=1}^N \frac{P(E_i)}{|E_i|} \ ; \ \ E_i \subset\Omega,\  |E_i\cap E_j|=\emptyset |\right\}.
	\end{equation}

Let us point out that recently, in \cite{bucur2017honeycomb}, as a consequence of a more general result, the asymptotic behavior of the above quantity (under an assumption on the convexity of the optimal cells) has been proven to be
	\[
	\lim_{N\rightarrow +\infty} \frac{H_N(\Om) }{ N^{3/2}}= \rho_0|\Om|^{-1/2}
	\]
where $\rho_0$ denotes a universal constant, namely the Cheeger constant of a unit area regular hexagon in the plane. In an even more recent  paper \cite{bucurFraga} Bucur and Fragalà where able to prove the above asymptotic behavior in full generality.
\\

By exploiting an idea that goes back to Caffarelli and Lin (see \cite{CaLi07}) we are here able to prove (see Theorem \ref{t:LN=HN}) that the quantity \eqref{H_N quantity} can be redefined as a minimum among a suitable set of BV functions. This approach, developed in Section~\ref{s:Ngeneral}, gives us an alternative point of view in looking at the problem and leads to an additional way to prove existence of Cheeger $N$-clusters. Furthermore the regularity properties known for Cheeger $N$-clusters yield regularity properties of the minimizers of the associated variational problem. 

In Section~\ref{s:N=2} we show that in case $N=2$ there is an alternative associated variational problem, closely related to the problem of the second eigenfunction of the $1$-Laplace operator. We use this equivalence to gain more insight in the ongoing investigations of the second eigenfunction of the $1$-Laplace operator.

Finally in Section~\ref{s:examples} we provide some applications of our results in specific geometric situations.

\subsection*{Notation and Conventions}

All sets we consider are assumed to be Borel sets and $|E|$ denotes Lebesgue measure of the Borel set $E$. As usual in geometric measure theory uniqueness statements are understood in the $L^1_{\textup{loc}}$- sense, \textit{i.e.} two Borel sets $E$ and $F$ are considered equivalent provided $|E\triangle F|=|E\setminus F \cup F\setminus E|=0$. The characteristic function of a set $E$ is denoted by $\ca_E$
\[
\ca_E(x)=\begin{cases}1 & x\in E\\ 0& x\not\in E\,.\end{cases}
\]

For $u\in L^1_{\loc}(\Omega)$  and $t \in\R$ let
\[
\{u>t\}:=\{x\in \Omega \setsep u(x)>t\}
\]
and analogously for "$<$".  For an $L^1(\Omega)$-function let $u^+:=u\ca_{\{u>0\}}$ and $u^-:=u^+ -u$ denote the positive and negative part. 

We are always considering an open, bounded set $\Om$ with Lipschitz boundary and we work on $BV(\Om)$ by implicitely identifying, whenever is needed this space with the space 
\[
	\big\{u\in BV(\R^n)\setsep u=0 \text{ on } \R^n\setminus \Omega\}\,.
	\]

It is well known that in terms of this identification for $\Omega$ with Lipschitz boundary the identity
\begin{align}
\int_{\R^n}\di |Du|=\int_{\Omega}\di |Du| + \int_{\bd \Omega}|u|\di \H^{n-1}\,
\label{e:TV_decomp}
\end{align}
holds for $u\in BV(\Omega)$. Moreover we write $u=(u^1,\ldots, n^N)$ for the components of an $L^1(\Omega,\R^N)$-function $u$.

\begin{remark}
The assumption that $\Omega$ has Lipschitz boundary is actually not necessary for our derivations. In the case that $\Omega$ has a boundary less regular than Lipschitz, the space $BV(\Omega)$ should be read as
\[
BV_0(\Omega):=\{u\in BV(\R^n)\setsep u=0 \text{ on $\R^n\setminus \Omega$ a.e.}\}\,.\]
Note that if $\bd\Omega$ is irregular, $BV(\Omega)$ might be strictly larger than $BV_0(\Omega)$. However, all the required properties in our further derivations (such as the compact embedding $BV_0(\Omega)\subseteq L^1(\Omega)$) can easily be proved for $BV_0(\Omega)$ as well. The essential difference is that in general formula \eqref{e:TV_decomp} makes no sense and we are limited to work with the total variation in $\R^n$ only.
\end{remark}

\section{Case $N=1$}

Let $\Omega\subseteq \R^n$ be nonempty, open and bounded with Lipschitz boundary. The classical Cheeger problem is given by
\[
H_1:=\inf \frac{\Per(E)}{|E|}\,
\]
where the infimum is taken over all measurable sets $E\subseteq \Omega$ with nonvanishing volume  $|E|$. It is well known that this problem is connected with the variational problem
\begin{align}
\Lambda_1:=\inf \left\{\frac{\int_{\R^n}\di |Du|}{\|u\|_1}\setsep u\in BV(\Omega)\setminus \{0\}\right\}\,.\label{e:VP_1Lap}
\end{align}
Minimizers of that problem are called first eigenfunctions of the $1$-Laplace operator and their existence is easily obtained by direct methods in calculus of variations using the compact embedding of $BV(\Omega)$ in $L^1(\Omega)$.

The precise connection between Cheeger sets and first eigenfunctions of the $1$-Laplacian is given by the following proposition.
\begin{proposition}\label{p:1stEF1Lap}
There holds $H_1=\Lambda_1$. Moreover a function $u\in BV(\Omega)\setminus \{0\}$ is a minimizer of \eqref{e:VP_1Lap} if and only if for almost all $t$ the sets
\[
E_t:=\begin{cases}
\{u>t\}&\text{ for  $t>0$}\\
\{u<t\}&\text{ for  $t<0$}
\end{cases}
\]
with nonvanishing volume are Cheeger sets of $\Omega$.

In particular the first eigenfunction of the $1$-Laplace operator is unique (up to scalar multiples) if and only if the Cheeger set of $\Omega$ is unique.
\end{proposition}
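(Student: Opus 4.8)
The plan is to prove the two inequalities $\Lambda_1 \le H_1$ and $H_1 \le \Lambda_1$ separately, and then to extract the characterization of minimizers from the sharp cases in the coarea argument. For $\Lambda_1 \le H_1$, I would take any admissible set $E \subseteq \Omega$ with $|E|>0$ and test \eqref{e:VP_1Lap} with $u = \ca_E \in BV(\Omega)$ (using the identification with functions in $BV(\R^n)$ vanishing outside $\Omega$). Then $\int_{\R^n}\di|Du| = \Per(E)$ and $\|u\|_1 = |E|$, so $\Lambda_1 \le \Per(E)/|E|$; taking the infimum over all such $E$ gives $\Lambda_1 \le H_1$.

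For the reverse inequality $H_1 \le \Lambda_1$, I would take any $u \in BV(\Omega)\setminus\{0\}$ and apply the coarea formula for $BV$ functions, $\int_{\R^n}\di|Du| = \int_{-\infty}^{\infty} \Per(\{u>t\})\di t$, together with the layer-cake representation $\|u\|_1 = \int_{-\infty}^{\infty} |\{u>t\}| \di t$ for $t>0$ plus the analogous term for $t<0$ — or, after first reducing to $u \ge 0$ by noting that replacing $u$ with $|u|$ does not increase the total variation and does not change $\|u\|_1$ (here one uses that $\{|u|>t\}=\{u>t\}\cup\{u<-t\}$ is a disjoint union up to null sets, so perimeters add, combined with \eqref{e:TV_decomp}); actually the cleanest route is to treat $u^+$ and $u^-$ separately since $|Du| = |Du^+| + |Du^-|$ as measures and $\|u\|_1 = \|u^+\|_1 + \|u^-\|_1$, reducing to the nonnegative case. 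For $u\ge 0$, since $\Per(\{u>t\}) \ge H_1 |\{u>t\}|$ for a.e.\ $t>0$ with $|\{u>t\}|>0$ (and both sides vanish when the volume does), integrating in $t$ yields $\int_{\R^n}\di|Du| \ge H_1 \|u\|_1$, hence $\Lambda_1 \ge H_1$. Combining the two inequalities gives $H_1 = \Lambda_1$.

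For the characterization, observe that equality $\int_{\R^n}\di|Du| = \Lambda_1 \|u\|_1$ forces equality in the integrated inequality above, so that $\Per(E_t) = H_1 |E_t|$ for a.e.\ $t$ with $|E_t|>0$, i.e.\ each such $E_t$ is a Cheeger set; this handles the forward implication once we have also argued that the decomposition into $u^+,u^-$ loses nothing, so equality propagates to both. Conversely, if a.e.\ superlevel/sublevel set of nonvanishing volume is a Cheeger set, then $\Per(E_t) = H_1|E_t|$ for a.e.\ relevant $t$, and the coarea plus layer-cake identities integrate this back up to $\int_{\R^n}\di|Du| = H_1\|u\|_1 = \Lambda_1\|u\|_1$, so $u$ is a minimizer. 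Finally, for the uniqueness statement: if the Cheeger set is unique, then for a minimizer $u$ almost every $E_t$ equals that fixed Cheeger set $C$ (up to null sets), which forces $u$ to be (a positive multiple of) $\ca_C$ — here one checks $u$ cannot change sign, since a genuinely signed $u$ would produce Cheeger sets from both $u^+$ and $u^-$ with disjoint supports, contradicting uniqueness — hence the first eigenfunction is unique up to scalars. Conversely, if there are two distinct Cheeger sets $C_1, C_2$, then $\ca_{C_1}$ and $\ca_{C_2}$ are two linearly independent minimizers of \eqref{e:VP_1Lap}, so the first eigenfunction is not unique up to scalars.

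The main obstacle I expect is the bookkeeping around signs and the exceptional null set of levels $t$: one must be careful that the coarea formula only gives the perimeter identity for \emph{almost every} $t$, that the set of $t$ with $|\{u>t\}|=0$ must be excluded (but contributes nothing), and that the reduction from signed $u$ to $u^+$ and $u^-$ genuinely preserves both the total variation and the $L^1$ norm additively — this last point uses \eqref{e:TV_decomp} and the fact that $u^+$ and $u^-$ have essentially disjoint supports so their jump contributions along $\bd\Omega$ and in the interior do not interact. None of these steps is deep, but getting the "if and only if" clean (rather than just the equality of constants) requires tracking the equality cases through each inequality.
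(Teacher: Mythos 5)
Your proposal is correct and follows essentially the same route as the paper: $\Lambda_1\le H_1$ by testing with $\ca_E$, and $H_1\le\Lambda_1$ together with the ``if and only if'' characterization by combining the coarea formula with Cavalieri's principle and tracking the equality cases, your split into $u^+$ and $u^-$ being just the paper's modified coarea formula for the sets $E_t$ in disguise. Two minor remarks: the parenthetical claim that perimeters of the disjoint union $\{u>t\}\cup\{u<-t\}$ add is false in general (reduced boundaries may overlap), but you only need subadditivity there and you discard that route anyway; and you spell out the uniqueness statement explicitly, which the paper leaves implicit.
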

The idea of the proof is basically due to \cite{kawohl-f:03}, where the attention was restricted to positive minimizers. Since we have no reference for the proof in full generality and since the idea complements our further derivations, we decided to give it here.
\begin{proof}
The key point is the coarea-formula for the total variation functional
\[
\int_{\R^n}\di |Du| = \int_\R \Per(E_t)\di t\,.
\]
Usually the coarea formula makes use of super level sets both for $t>0$ and $t<0$. The validity of the formula given here is easily deduced from the standard proof of the coarea formula (cf.~\cite[Theorem 3.40]{ambrosio2000functions}, \cite[p. 185\,ff.]{EvGa91}). In contrast to the common version the sets $E_t$ match perfectly with Cavalieri`s principle
\[
\|u\|_1=\int_\Omega u^+ \di x + \int_\Omega u^-\di x = \int_0^\infty |\{u^+>t\}|\di t + \int_0^\infty|\{u^->t\}|\di t=\int_\R|E_t|\di t\,.
\]

If $E\subseteq \Omega$ is a set of finite perimeter we have $u:=\ca_E\in BV(\Omega)$, thus
\[
\Lambda_1\le \frac{\int_{\R^n}\di |Du|}{\|u\|_1}=\frac{\Per(E)}{|E|}
\]
and by arbitrariness of $E$ we derive $\Lambda_1\le H_1$.

\medskip

Let now $u$ be a minimizer of \eqref{e:VP_1Lap}, then $\Lambda_1=\frac{\int_\R \Per(E_t)\di t}{\int_\R |E_t|\di t}$, thus
\[
\int_{\R} \Per(E_t)-\Lambda_1 |E_t|\di t=0.
\]
By $\Lambda_1\le H_1\le \frac{\Per(E_t)}{|E_t|}$ the integrand turns out to be nonnegative and thus
\[
\Per(E_t)-\Lambda_1 |E_t|=0
\]
for almost all $t$ with $|E_t|\neq 0$. But this yields $\Lambda_1=\frac{\Per(E_t)}{|E_t|}\ge H_1$ for almost all $t$ which finally implies $H_1=\Lambda_1$.

If now on the other hand almost all level sets $E_t$ with positive volume of a BV-function $u$ are Cheeger sets, we have
\[
\Lambda_1\|u\|_1=H_1\int_\R |E_t|\di t = \int_\R\Per(E_t)\di t = \int_{\R^n}\di |Du|\,,
\]
i.\,e.\ $u$ is a first eigenfunction of the $1$-Laplace operator.
\end{proof}

\section{Case $N$ general}
\label{s:Ngeneral}

Let $n,N\in\N$ and $\Om\subseteq \R^n$ be nonempty, open and bounded. The Cheeger $N$-problem in $\Omega$ is given by
\begin{align}
H_N:=\inf\left\{\sum_{i=1}^N \frac{\Per(E_i)}{|E_i|}\setsep E_i\subseteq\Omega, |E_i|>0 \text{ and } |E_i\cap E_j|=0, i\neq j\right\}\,.\label{e:infC}
\end{align}

An admissible family $(E_1,\ldots, E_N)$ in \eqref{e:infC} is called cluster and the corresponding components $E_1,\ldots, E_N$ are called chambers of the cluster. A minimizing cluster in \eqref{e:infC} is called \textit{Cheeger-$N$-cluster.}

Our goal is to show that the Cheeger $N$-problem is related to a variational problem similar to what is done in Proposition~\ref{p:1stEF1Lap}. To do so, following an idea from \cite{CaLi07}, we define $\S$
\[
\S:=\bigcup_{i=1}^N\{t e_i\setsep t\ge 0\}\,
\]
as the skeleton of the positive coordinate axis of $\R^N$. Let $BV(\Omega,\S)$ be the set of those $BV(\Omega,\R^N)$-functions $u$ that take values $u(x)=(u^1(x),\ldots, u^N(x))$ in $\Sigma$ for almost every $x\in\Omega$. We define $\cE:BV(\Omega,\R^N)\to \R$ by
\begin{align}
\E(u)=\sum_{i=1}^N |Du^i|(\R^n)=\int_{\R^n} \di \|Du\|_{\star}\label{e:defcE}
\end{align}
where $\|A\|_{\star}=\sum_{i=1}^N\left(\sum_{j=1}^n a_{ij}^2\right)^{1/2}$ for a matrix $A=(a_{ij})$ in $\R^{N\times n}$. Note that this is not the usual definition of total variation of a $BV(\R^n,\R^N)$-function, since $\|\cdot\|_{\star}$ is not the usual Euclidean norm $|\cdot|$. We now consider the minimization problem
\begin{align}
\Lambda_N:=\inf \big\{\E(u)\setsep u\in BV(\Omega,\S), \|u_i\|_1=1, i=1,\ldots N\big\}\,.\label{e:infE}
\end{align}
It will turn out that, similar to Proposition~\ref{p:1stEF1Lap}, the super level sets $\{u^i>t\}$ of a minimizer of \eqref{e:infE} are connected to the Cheeger-$N$-problem.

\begin{theorem}[Existence of minimizers for \eqref{e:infE}]\label{thm existence}
For every $N\in\N$ there exists a function $u\in BV(\Om;\S)$ such that
	\[
	\E(u)=\La_N.
	\]
\end{theorem}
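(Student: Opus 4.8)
The plan is to use the direct method in the calculus of variations, with the main issue being that the constraint set $\{u \in BV(\Omega,\S) : \|u^i\|_1 = 1\}$ is not weakly closed in a naive sense, because under $L^1$-convergence the normalization $\|u^i\|_1 = 1$ could in principle fail (mass could escape, but on a bounded $\Omega$ this is not an issue; the real subtlety is that the $L^1$ norm is only lower semicontinuous against weak-$*$ convergence of measures, so one must argue the limit components are still nonzero and then rescale).

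First I would take a minimizing sequence $(u_k) \subset BV(\Omega,\S)$ with $\E(u_k) \to \La_N$ and $\|u_k^i\|_1 = 1$ for all $i$ and all $k$. Since $\E(u_k)$ is bounded and, by the Lipschitz assumption on $\Omega$ together with the identification $BV(\Omega) \hookrightarrow BV(\R^n)$, controls $\int_{\R^n} \di |Du_k^i|$, each component sequence $(u_k^i)$ is bounded in $BV(\R^n)$ with support in $\overline{\Omega}$. By the compact embedding $BV_0(\Omega) \hookrightarrow L^1(\Omega)$ (equivalently, $BV$-compactness on the bounded set $\Omega$), after passing to a subsequence we obtain $u_k^i \to u^i$ in $L^1(\Omega)$ and pointwise a.e., for every $i = 1,\dots,N$. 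Pointwise a.e. convergence immediately gives that the limit $u = (u^1,\dots,u^N)$ still takes values in $\S$ a.e.\ (the constraint $u(x) \in \S$ is a closed condition preserved under pointwise limits), and $L^1$-convergence gives $\|u^i\|_1 = 1$ for each $i$, so in particular $u \not\equiv 0$ on each axis and $u$ is admissible for \eqref{e:infE}.

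Next I would establish lower semicontinuity of $\E$ along this convergence. Since $\E(u) = \sum_{i=1}^N |Du^i|(\R^n)$ is a finite sum of the ordinary total variations of the scalar functions $u_k^i$ (here the nonstandard norm $\|\cdot\|_\star$ is irrelevant because it is applied componentwise and each $Du^i$ is an $\R^n$-valued measure, so $\|Du\|_\star = \sum_i |Du^i|$), and since $L^1$-convergence $u_k^i \to u^i$ implies weak-$*$ convergence $Du_k^i \weakstar Du^i$ as measures on $\R^n$, the standard lower semicontinuity of the total variation under $L^1_{\loc}$-convergence yields $|Du^i|(\R^n) \le \liminf_k |Du_k^i|(\R^n)$ for each $i$. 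Summing over $i$ gives $\E(u) \le \liminf_k \E(u_k) = \La_N$. Combined with admissibility of $u$, which forces $\E(u) \ge \La_N$, we conclude $\E(u) = \La_N$.

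The step I expect to require the most care is ensuring the constraint $\|u^i\|_1 = 1$ passes to the limit and that no chamber degenerates: on a bounded Lipschitz domain the compact embedding delivers strong $L^1$-convergence, which makes this automatic, so in fact the argument is clean; the only genuinely nonroutine point is recording that the value constraint $u(x)\in\S$ is stable under a.e.\ convergence, which is where the componentwise structure of $\S$ (at each point at most one coordinate is nonzero) is used — and this is inherited in the limit since for a.e.\ $x$ one can pass to the limit in the relations $u_k^i(x)\,u_k^j(x) = 0$ for $i \neq j$.
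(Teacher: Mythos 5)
Your proposal is correct and follows essentially the same route as the paper: direct method with a minimizing sequence, componentwise $BV$ bounds from $\E$, compact embedding into $L^1(\Omega)$ giving strong and a.e.\ convergence, stability of the constraints ($\|u^i\|_1=1$ by strong convergence, $u(x)\in\S$ by closedness of $\S$ under a.e.\ limits), and lower semicontinuity of the total variation. The only small point the paper makes explicit that you leave implicit is that the admissible class is nonempty and $\La_N<+\infty$ (checked there by taking $N$ disjoint rescaled characteristic functions of balls), which is what legitimizes starting from a minimizing sequence with bounded energy.
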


\begin{proof}
Clearly $\La_N<+\infty$ since, by considering a family of $N$ disjoint balls $B_1,\ldots,B_N\subset \Om$ and by defining
	\[
	u:=\Big(\frac{1}{|B_1|} \ca_{B_1},\ldots,\frac{1}{|B_N|} \ca_{B_N}\Big)
	\]
we have
	\[
	\La_N\leq \E(u)= \sum_{i=1}^N \frac{\Per(B_i)}{|B_i|}<+\infty.
	\]
Let now $(u_{k})_{k\in \N}$ be a minimizing sequence for \eqref{e:infE}. Then, for all $i=1,\ldots,N$ and ultimately all $k\in\N$ we have 
	\begin{align*}
	2\La_N\geq \E\big(u_k\big)\geq |Du^i_k|(\R^n).
	\end{align*}

By standard compact embedding arguments we may thus assume for $i=1,\ldots N$
\[
u^i_k\to: u^i\quad\text{in $L^1(\Omega)$ as $k\to \infty$}
\]
and in particular $\|u^i\|_1=1$. We may moreover assume that the convergence is point-wise and, since $\S$ is closed $u$ takes values in $\Sigma$, almost everywhere. By lower semicontinuity of the total variation we derive
\begin{align*}
\Lambda_N\le \cE(u)=\sum_{i=1}^N
|Du_k^i|(\R^n)&\le \sum_{i=1}^N\liminf_{k\to \infty}|Du_k^i|(\R^n)\\
&\le \limsup_{k\to\infty}\sum_{i=1}^N|Du_k^i|(\R^n)=\Lambda_N\,.
\end{align*}
In particular $u=(u^1,\ldots,u^N)$ is  a  desired minimizer.

\end{proof}
\begin{remark}
If $BV(\Om)$ does not embed compactly into $L^1(\Omega)$ our existence proof fails and in fact e.\,g.\ $\Lambda_N=0$ for $\Omega=\R^n$, but the infimum is not attained. To see that consider $B_r(x_1),\ldots B_r(x_N)\subset \R^n$, $N$ disjoint ball of radius $r$, and define
	\[
	u:=\frac{1}{\om_n r^n}(\ca_{B_r(x_1)},\ldots,\ca_{B_r(x_N)}),
	\]
we must than have
	\[
	\La_N\leq \E(u)=N \frac{n\om_n r^{n-1}}{\om_n r^n}=\frac{nN}{r} \ \ \ \text{for every $r>0$}.
	\]

The arbitrariness of $r$ implies $\La_N=0$, however the only function with total variation zero is the zero function, which does not satisfy the constraints on the components.
\end{remark}

\begin{theorem}\label{t:LN=HN}
There holds $\Lambda_N=H_N$. Moreover we have correspondance between the minimizers of problems \eqref{e:infE} and \eqref{e:infC} in the following sense:

An admissible competitor in \eqref{e:infE} is minimizing if and only if for almost all $t_i>0$ such that $|\{ u^i>t_i\}|>0$ ($i=1,\ldots,N)$ the family of sets %defined by
		\begin{equation}\label{e: upper level sets}
	E_i:= \{u^i> t_i\}
		\end{equation}
provides a Cheeger $N$-cluster $E=(E_1,\ldots, E_N)$ of $\Omega$.

In particular if $E=(E_1,\ldots, E_N)$ is a Cheeger $N$-cluster, then
\begin{equation}\label{eqn: u minim}
v:=\Big(\frac{1}{|E_1|}\ca_{E_1},\ldots,\frac{1}{|E_N|}\ca_{E_N}\Big)
\end{equation}
defines a minimizer of \eqref{e:infE}.
\end{theorem}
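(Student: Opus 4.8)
The plan is to mimic the structure of the proof of Proposition~\ref{p:1stEF1Lap}, but carried out componentwise, using the coarea formula and Cavalieri's principle on each coordinate $u^i$ separately. The crucial preliminary observation is that for $u\in BV(\Omega,\S)$ the coordinate functions have disjoint supports: since $u(x)$ lies on one of the coordinate half-axes for a.e.~$x$, at most one $u^i(x)$ is nonzero, so the super level sets $\{u^i>t_i\}$ (for $t_i>0$) are automatically pairwise disjoint up to null sets, and hence form an admissible cluster whenever they have positive volume. I would record this first, together with the remark that for $u\in BV(\Omega,\S)$ one has $\E(u)=\sum_i \int_{\R^n}\di|Du^i|$ with each $|Du^i|(\R^n)=\int_0^\infty \Per(\{u^i>t\})\di t$ by the (one-sided, nonnegative-$t$) coarea formula, and $\|u^i\|_1=\int_0^\infty |\{u^i>t\}|\di t$ by Cavalieri.

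Next I would prove the inequality $\Lambda_N\le H_N$: given any admissible cluster $(E_1,\dots,E_N)$ in \eqref{e:infC}, the function $v$ from \eqref{eqn: u minim} lies in $BV(\Omega,\S)$ (its coordinates have disjoint supports because the $E_i$ are disjoint), satisfies $\|v^i\|_1=1$, and $\E(v)=\sum_i \Per(E_i)/|E_i|$; taking the infimum over clusters gives $\Lambda_N\le H_N$. Conversely, let $u$ be a minimizer of \eqref{e:infE}, which exists by Theorem~\ref{thm existence}. Writing $E_t^i:=\{u^i>t\}$, I would combine coarea and Cavalieri to get
\[
\Lambda_N=\E(u)=\sum_{i=1}^N\int_0^\infty \Per(E_t^i)\di t,\qquad \sum_{i=1}^N\int_0^\infty |E_t^i|\di t=N.
\]
The key device is to integrate the \emph{cluster energy of the slices}: for a.e.~choice of thresholds $t=(t_1,\dots,t_N)$ with all $|E_{t_i}^i|>0$, the slices $(E_{t_1}^1,\dots,E_{t_N}^N)$ form an admissible cluster, so $\sum_i \Per(E_{t_i}^i)/|E_{t_i}^i|\ge H_N$. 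One then needs to turn the separate integral identities above into a statement that for a.e.~threshold vector the slice cluster is actually optimal. The cleanest route: show $\Lambda_N\ge H_N$ by a direct slicing/averaging argument — pick (via a measure-theoretic selection, essentially Fubini on $(0,\infty)^N$ weighted appropriately, or simply by the pigeonhole/mean-value observation that the average of $\sum_i \Per(E_{t_i}^i)/|E_{t_i}^i|$ over a suitable product region cannot exceed $\E(u)/\prod\|u^i\|_1$-type quantity) a threshold vector whose slice cluster has total energy $\le \Lambda_N$. Since that energy is also $\ge H_N$, we get $\Lambda_N\ge H_N$, hence equality.

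For the characterization of minimizers I would argue as in Proposition~\ref{p:1stEF1Lap}: once $\Lambda_N=H_N$ is known, for a minimizer $u$ the nonnegativity of each integrand $\Per(E_t^i)-H_1(\text{per-chamber contribution})$ is subtler because the chambers are coupled only through the global constraint $\sum H_N$; the right normalization is to note that for \emph{each fixed} $i$ and a.e.\ admissible threshold vector, replacing the $i$-th slice while keeping the others fixed cannot decrease the cluster energy, which forces $\Per(E_{t_i}^i)/|E_{t_i}^i|$ to be constant in $t_i$ and the resulting cluster to be Cheeger-optimal; conversely if a.e.\ slice family $(\{u^i>t_i\})_i$ is a Cheeger-$N$-cluster then the chain $\Lambda_N\le\E(u)=\sum_i\int_0^\infty\Per(E_t^i)\di t$ and $H_N\sum_i\|u^i\|_1=H_N N=\Lambda_N N$ combine to show $u$ is a minimizer. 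Finally, the already-checked computation $\E(v)=\sum_i\Per(E_i)/|E_i|=H_N=\Lambda_N$ for $v$ as in \eqref{eqn: u minim} gives the last assertion. I expect the main obstacle to be the slicing/selection step that upgrades the two scalar integral identities into a statement valid for a.e.\ threshold \emph{vector} simultaneously — i.e.\ correctly handling the $N$-fold product of thresholds and the coupling through the disjointness and the global constraint, rather than the coarea or semicontinuity inputs, which are routine.
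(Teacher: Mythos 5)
Your slicing route for the equality $\Lambda_N=H_N$ is genuinely different from the paper's and, for that part, it works: $\Lambda_N\le H_N$ by inserting normalized characteristic functions is exactly as in the paper, and your averaging selection can be made precise by taking, for each $i$, the probability measure $\mu_i:=|\{u^i>t\}|\di t$ on $(0,\infty)$ (Cavalieri and $\|u^i\|_1=1$), noting $\int_0^\infty \frac{\Per(\{u^i>t\})}{|\{u^i>t\}|}\di\mu_i(t)=|Du^i|(\R^n)$, and choosing $t_i$ where the ratio does not exceed its $\mu_i$-average; the resulting slice family is an admissible cluster of energy at most $\E(u)=\Lambda_N$, whence $H_N\le\Lambda_N$. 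The paper instead proves Lemma~\ref{lemma: key lemma} by a perturbation argument (composing $u^1$ with a piecewise-linear $\psi$ and renormalizing), which yields the stronger fact that $\Per(\{u^i>t\})/|\{u^i>t\}|$ equals $|Du^i|(\R^n)$ for a.e.\ $t$; that stronger fact is what drives the full ``if and only if'' statement.

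The genuine gap is in the forward direction of the characterization (minimizer $\Rightarrow$ almost every slice family is a Cheeger $N$-cluster). Your justification, that ``replacing the $i$-th slice while keeping the others fixed cannot decrease the cluster energy,'' does not follow from minimality of $u$ in \eqref{e:infE}: a competitor set $F$ disjoint from the other slices $\{u^j>t_j\}$ need not be disjoint from the supports $\{u^j>0\}$, so substituting $\ca_F/|F|$ for $u^i$ does not produce a $\Sigma$-valued admissible function, and without such a comparison the claim is essentially the conclusion restated (moreover, chamber-wise unimprovability alone would not give global Cheeger optimality). You yourself flag this upgrade as the unresolved ``main obstacle.'' It can be closed either by the paper's Lemma~\ref{lemma: key lemma}, or by pushing your averaging one step further: writing $f_i(t):=\Per(\{u^i>t\})/|\{u^i>t\}|$ and $m_i:=\mu_i\text{-}\mathrm{ess\,inf}\, f_i$, the inequality $\sum_i f_i(t_i)\ge H_N$, valid for every admissible threshold vector, gives $\sum_i m_i\ge H_N=\Lambda_N=\sum_i\int f_i\di\mu_i\ge\sum_i m_i$, forcing $f_i=\int f_i\di\mu_i$ $\mu_i$-a.e., i.e.\ exactly the conclusion of Lemma~\ref{lemma: key lemma}, after which a.e.\ slice family has energy $H_N$ and is Cheeger. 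Finally, your converse chain ``$H_N\sum_i\|u^i\|_1=H_N N=\Lambda_N N$'' is garbled: the correct computation, as in the paper, first uses optimality of the slice clusters with all but one chamber fixed to get constants $h_i$ with $f_i\equiv h_i$ a.e.\ and $\sum_i h_i=H_N$, and then $\E(u)=\sum_i\int_0^\infty\Per(\{u^i>t\})\di t=\sum_i h_i\|u^i\|_1=H_N$, so $u$ is minimizing; no factor $N$ appears.
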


The proof of Theorem \ref{t:LN=HN} comes easily as a consequence of the following lemma.
\begin{lemma}\label{lemma: key lemma}
Let $u$ be a minimizer for \eqref{e:infE} and $i\in \{1,\ldots, N\}$. Then for almost every $t>0$ with $|\{u^i>t\}|\neq 0$ it holds
	\[
	\frac{\Per\left(\{u^i>t\}\right)}{|\{u^i>t\}|}= |Du^i|(\R^n)\,.
	\]
\end{lemma}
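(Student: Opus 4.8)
\textbf{Proof plan for Lemma \ref{lemma: key lemma}.}
The plan is to mimic the argument of Proposition~\ref{p:1stEF1Lap}, using the single scalar component $u^i$ of the vector minimizer $u$, but with the crucial twist that one may only perturb that one component while leaving the others fixed, so one needs to control how the constraint $\|u^i\|_1=1$ interacts with the energy $\E$. First I would record the coarea formula in the form used there, namely $|Du^i|(\R^n)=\int_0^\infty \Per(\{u^i>t\})\di t$, valid because $u^i\ge 0$ (its values lie on $\S$, hence in the nonnegative axis $\{te_i : t\ge0\}$), together with Cavalieri's principle $\|u^i\|_1=\int_0^\infty |\{u^i>t\}|\di t=1$. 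Writing $L_i:=|Du^i|(\R^n)$ and $E_t:=\{u^i>t\}$ this gives $\int_0^\infty \Per(E_t)\di t = L_i$ and $\int_0^\infty |E_t|\di t = 1$, so $\int_0^\infty\big(\Per(E_t)-L_i|E_t|\big)\di t = 0$.

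The heart of the matter is then the inequality $\Per(E_t)\ge L_i\,|E_t|$ for a.e.\ $t>0$ with $|E_t|>0$, i.e.\ $\Per(E_t)/|E_t|\ge L_i$, which combined with the vanishing integral above forces equality for a.e.\ such $t$. To prove it I would argue by contradiction: if on a positive-measure set of levels $t$ one had $\Per(E_t)<L_i|E_t|$, pick one such good level $\tau$ with $|E_\tau|>0$ and replace the $i$-th component of $u$ by $w^i:=|E_\tau|^{-1}\ca_{E_\tau}$, keeping all other components $u^j$ ($j\ne i$) unchanged. The new function $\tilde u=(u^1,\dots,w^i,\dots,u^N)$ still takes values in $\S$ (only the $i$-th slot is touched and it is supported where the others vanish up to null sets — this needs the almost-everywhere disjointness coming from $u(x)\in\S$), satisfies $\|w^i\|_1=1$ and $\|u^j\|_1=1$ for $j\ne i$, hence is admissible in \eqref{e:infE}; and $\E(\tilde u)-\E(u)=|Dw^i|(\R^n)-|Du^i|(\R^n)=\Per(E_\tau)/|E_\tau|-L_i<0$, contradicting minimality of $u$. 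This yields $\Per(E_t)/|E_t|\ge L_i$ for a.e.\ $t$, and then equality a.e.\ as explained.

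The main obstacle I anticipate is the comparison step: one has to be sure that swapping in $|E_\tau|^{-1}\ca_{E_\tau}$ for the $i$-th component really produces an element of $BV(\Omega,\S)$. Because $u(x)\in\S$ for a.e.\ $x$, at a.e.\ point at most one coordinate of $u$ is nonzero, so $\{u^i>0\}$ is (up to a null set) disjoint from each $\{u^j>0\}$, $j\ne i$; since $E_\tau\subseteq\{u^i>0\}$, the modified vector $\tilde u(x)$ has at most one nonzero coordinate a.e., i.e.\ lies on $\S$. One also needs $\ca_{E_\tau}\in BV(\R^n)$, which holds since $\tau$ can be chosen among the a.e.\ levels for which $E_\tau$ has finite perimeter (the coarea formula gives $\Per(E_t)<\infty$ for a.e.\ $t$). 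Everything else — lower semicontinuity is not even needed here, only the exact identities from coarea and Cavalieri plus the one-sided inequality — is routine, so once the admissibility of the competitor is nailed down the lemma follows immediately, and Theorem~\ref{t:LN=HN} drops out by summing over $i$ and matching with the definition of $H_N$.
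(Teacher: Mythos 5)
Your argument is correct, but it follows a genuinely different route from the paper's proof of Lemma~\ref{lemma: key lemma}. You obtain the one-sided bound $\Per(\{u^i>t\})\ge |Du^i|(\R^n)\,|\{u^i>t\}|$ by swapping the $i$-th component for the normalized indicator $|E_\tau|^{-1}\ca_{E_\tau}$ of a superlevel set (the admissibility check via $u(x)\in\S$ a.e.\ and the finiteness of $\Per(E_\tau)$ for a.e.\ $\tau$ are exactly the points that need care, and you handle them), and then you conclude with the coarea/Cavalieri ``nonnegative integrand with zero integral'' device, precisely as in the scalar Proposition~\ref{p:1stEF1Lap}. The paper instead perturbs $u^i$ internally: it composes with a piecewise linear $\psi$ that rescales the levels in an interval $[t,T]$ by a factor $\alpha>0$, renormalizes to restore the constraint, and from minimality for all $\alpha$ extracts the identity $AD=BC$ between the averaged ratios $\int\Per(\{u^i>\tau\})\di\tau / \int|\{u^i>\tau\}|\di\tau$ over $(t,T)$ and over its complement, finally localizing by a Lebesgue-point limit $t\to T^-$. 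Your version is more elementary (no one-parameter family, no differentiation of averages) and its competitor is essentially the same normalized-indicator function the paper itself uses later in the proof of Theorem~\ref{t:LN=HN}, so it streamlines the overall argument; the paper's $\psi$-perturbation has the flavor of an inner variation that never leaves the class of functions obtained by reparametrizing the values of $u^i$, which makes the equalization of the level-set ratios visible level by level and is the kind of argument that would survive in settings where replacing a component by a characteristic function is not an admissible operation. Both proofs yield the lemma, and from it Theorem~\ref{t:LN=HN} follows as in the paper.
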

\begin{proof}
Without loss of generality we consider $i=1$ and fix $0<t<T<\esssup u^1$ and let $\a>0$. We define a function $\psi:\R_{\ge 0}\to\R_{\ge 0}$ by
	\[
	\psi(s):=\begin{cases}
    s               & \text{ for } s< t \\
	t+\a(s-t)       & \text{ for } s\in [t,T)\\
	t+\a(T-t)+(s-T) & \text{ for } s\in [T,\infty)\,.
	\end{cases}	
	\]
	
\begin{figure}
\begin{tikzpicture}[domain=-1:6,yscale=.6,xscale=.6]
%\fill[gray] (-1.2,-1.2) rectangle (1.2,2.5);
%\draw[very thin] (-1.1,-1.1) grid[step=.5] (1.1,2.4);
\draw[color=black,thick,->] (-1,0) -- (6,0);
\draw[color=black,thick,->] (0,-1) -- (0,6);
\draw[color=green!50!black,thick] (0,0)--(2,2)--(4,3)--(6,5) node[anchor=west]{$\psi$};
\draw[color=black,thin] (2,.2)--(2,-.2) node[anchor=north] {$t$};
\draw[color=black,thin] (4,.2)--(4,-.2) node[anchor=north] {$T$};
\end{tikzpicture}%
\ \ \ \ 
\begin{tikzpicture}[domain=-1:5,yscale=.6,xscale=.6]
\draw[color=black,thick,->] (-1,0) -- (5,0);
\draw[color=black,thick,->] (0,-1) -- (0,6);
\draw[color=green!50!black,thick,domain=0:4] plot[id=u]  function {-1.5*x*(x-4)}; \draw[color=green!50!black,thick] (3,5) node[anchor=west]{$u^1$};
\draw[color=black,thin](4,2)--(-.3,2) node[anchor=east] {$t$};
\draw[color=black,thin](4,4)--(-.3,4) node[anchor=east] {$T$};
\end{tikzpicture}%
\begin{tikzpicture}[domain=-1:5,yscale=.6,xscale=.6,smooth]
\draw[color=black,thick,->] (-1,0) -- (5,0);
\draw[color=black,thick,->] (0,-1) -- (0,6);
%2+-sqrt(8/3)=2+-1.63299
\draw[color=green!50!black,thick,domain=0:.36701] plot[id=u]  function {-1.5*x*(x-4)}; 
\draw[color=green!50!black,thick,domain=3.63299:4] plot[id=u]  function {-1.5*x*(x-4)}; 
%2+-sqrt(4/3)=2+-1.15470
\draw[color=green!50!black,thick,domain=.8453:3.15470] plot[id=u]  function {-1.5*x*(x-4)-1};
\draw[color=green!50!black,thick,domain=.36701:.8453] plot[id=u]  function {2 + .5*(-1.5*x*(x-4)-2)};
\draw[color=green!50!black,thick,domain=3.15470:3.63299] plot[id=u]  function {2 + .5*(-1.5*x*(x-4)-2)};
\draw[color=green!50!black,thick] (3,4) node[anchor=west]{$\psi\circ u^1$};%
\draw[color=black,thin](4,2)--(-.3,2) node[anchor=east] {$t$};
\draw[color=black,thin](4,3)--(-.3,3) node[anchor=east] {$t+\a(T-t)$};
\end{tikzpicture}
\caption{The function $\psi$ and how it acts composed with $u^1$.}
\end{figure}
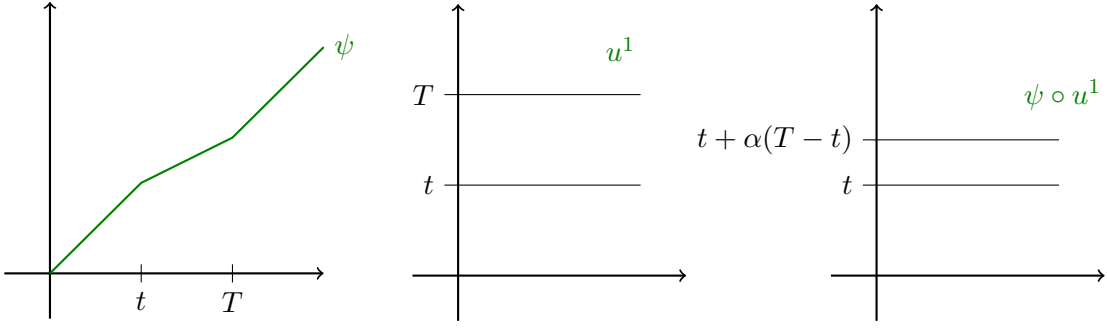	
	\[
	v:=\psi \circ u^1, \ \ \ \kappa:=\left(\int_{\Om} \psi(u^1(x))\di x\right)^{-1}, \ \ \ \overline{u}_1=\kappa v.
	\]
Notice that $\overline{u}=(\ov{u}^1,u^2,\ldots,u^N)$ is an admissible competitor in the minimization problem \eqref{e:infE} and, thanks to the minimality property of $u$, $\E(u)\leq \E(\ov{u})$, thus
	\begin{equation}\label{e: minimal proof}
 \int_{\R^n} \di |Du^1|\leq \int_{\R^n} \di |D\ov{u}^1|.
	\end{equation}

The co-area formula implies that
	\begin{align}
\int_{\R^n} \di |Dv| &=\int_0^{+\infty} \Per\left(\{v>\tau\}\right)\di \tau\nonumber\\
	&= \bigg[\int_0^{t} \Per\left(\{v>\tau\}\right)\di \tau  + \int_t^{t+\a(T-t)} \Per\left(\{v>\tau\}\right)\di \tau\nonumber\\
	& \ \ \   
	 +\int_{t+\a(T-t)}^{+\infty} \Per\left(\{v>\tau\}\right)\di \tau \bigg]\nonumber\\
	 &= \bigg[\int_0^{t} \Per\left(\{u^1>\tau\}\right)\di \tau  + \a \int_t^{T} \Per\left(\{u^1>\tau\}\right)\di \tau\nonumber\\
	& \ \ \   
	 +\int_{T}^{+\infty} \Per\left(\{u^1>\tau\}\right)\di \tau \bigg], \label{e: one}
	\end{align}
and obviously $\int_{\R^n} \di |D\ov{u}^1|=\kappa \int_{\R^n}\di |D v|$.

Moreover, thanks Cavalieri`s principle, we reach
	\begin{align}
	\kappa&=\left(\int_{\Om} v \di x\right)^{-1} =\left(\int_0^{+\infty} |\{v>\tau\}|\di \tau\right)^{-1}\nonumber\\
	&=\left(\int_0^t |\{u^1>\tau\}|\di\tau+\a \int_t^T |\{u^1>\tau\})|\di\tau +\int_T^{+\infty} |\{u^1>\tau\}|\di\tau\right)^{-1}\,.\label{e: three}
	\end{align}
By collecting \eqref{e: one}, \eqref{e: three} and by exploiting \eqref{e: minimal proof} we are lead to the relation
	\begin{align*}
\int_{\R^n} \di |D\ov{u}^1|&= \frac{\int_{(0,t)\cup(T,\infty)}\Per\left(\{u^1>\tau\}\right)\di \tau+\a\int_t^T \Per\left(\{u^1>\tau\}\right)\di\tau }{\int_{(0,t)\cup(T,\infty)} \left|\{u^1>\tau\}\right|\di\tau +\a \int_t^T \left|\{u^1>\tau\}\right|\di\tau} \\
 \geq &\frac{\int_{(0,t)\cup(T,\infty)} \Per\left(\{u^1>\tau\}\right)\di\tau+\int_t^T \Per\left(\{u^1>\tau\}\right)\di\tau }{\int_{(0,t)\cup(T,\infty)} \left|\{u^1>\tau\}\right|\di\tau+ \int_t^T \left|\{u^1>\tau\}\right|\di\tau}=\int_{\R^n}\di |Du^1|.
	\end{align*}
Notice that this is an inequality of the type
	\[
	\frac{A+\a B}{C+\a D}\geq \frac{A+B}{C+D}
	\]
where $A,B,C,D>0$ and we want it to hold for any $\a>0$. This inequality implies immediately that
	\[
	AD(1-\a)\geq BC(1-\a),
	\]
which, thanks to the arbitrariness of $\a$, leads to $AD=BC$. In particular
	\[
\frac{\int_{(0,t)\cup(T,\infty)} \Per\left(\{u^1>\tau\}\right)\di \tau}{\int_{(0,t)\cup(T,\infty)} \left|\{u^1>\tau\}\right|\di\tau} 
 =\frac{\int_t^T \Per\left(\{u^1>\tau\}\right)\di\tau }{\int_t^T \left|\{u^1>\tau\}\right|\di\tau}.
	\]	
By noticing that, for almost every $T>0$, it must hold
	\[
	\lim_{t\rightarrow T^-}\frac{\int_t^T \Per\left(\{u^1>\tau\}\right)\di\tau }{\int_t^T \left|\{u^1>\tau\}\right|\di\tau}=\frac{\Per\left(\{u^1>T\}\right)}{\left|\{u^1>T\})\right|},
	\] 
we achieve the proof.
\end{proof}

\begin{proof}[Proof of Theorem \ref{t:LN=HN}]
Let $u$ be a minimizer of problem \eqref{e:infE} and let $t_i$ be such that the statement of Lemma~\ref{lemma: key lemma} applies for each $i\in\{1,\ldots N\}$. Define $\tilde u$ by 
\[
\tilde u:= \frac{1}{|\{u^i>t_i\}|}\ca_{\{u^i>t_i\}}\,,
\] then due to the minimality of $u$ we have
\[
\Lambda_N=\cE(u)\le\E(\tilde u)= \sum_{i=1}^N \frac{\Per(\{u^i>t_i\} ) }{|\{u^i>t_i\} |}=\E(u)\,.\]
In particular we derive $H_N\le \Lambda_N=\cE(u)$.

\bigskip

Let conversely $u$ be such that for almost all $t>0$ and certain $t_2,\ldots,t_N$ the family of sets $(F_t,E_2,\ldots E_N)$ with $E_i$ as in \eqref{e: upper level sets} provides a Cheeger-$N$-cluster of $\Omega$. Since all but the first chamber are fixed and all the clusters are supposed to be optimal, we conclude that there is $h_1$ with
\[
h_1=\frac{\Per(F_t)}{|F_t|}
\]
for almost all $t$ with $|F_t|\neq 0$.
Repeating this argument for $i=2,\ldots, N$, we derive that there are $h_1,\ldots, h_N$ with
\[
h_i=\frac{\Per (\{u^i>t_i\}}{|\{u^i> t_i\}|}
\]
for almost all $t_i>0$ such that $|\{u^i> t_i\}|\neq 0$. Moreover
\[
H_N=\sum_{i=1}^N h_i\,.
\]

Using definition \eqref{e:defcE}, the co-area formula and Cavalieri's principle we now conclude
\begin{align*}
\Lambda_N\le\cE(u)=&\sum_{i=1}^N\int_0^\infty \Per(\{u^i>t\})\di t
=\sum_{i=1}^N\int_0^\infty h_i |\{u^i>t\}|\di t
=\sum_{i=1}^N h_i = H_N\,.
\end{align*}

The statement on \eqref{eqn: u minim} is elementary now.
\end{proof}

\section{Special case $N=2$}
\label{s:N=2}

The skeleton $\Sigma$ in the construction above is chosen in order to ensure the sublevel sets of the components $u^i$ to be disjoint. In case of $N=2$ we can use signed functions instead of the rather technical set $\Sigma$. Let $\cE_1$ denote the functional
\[
\cE_1(v):=\int_{\R^n}\di |Dv|
\]
for functions $v\in BV(\Omega)$. We than consider
\begin{align}
M_2:=\inf\left\{\cE_1(v)\setsep v\in BV(\Omega), \|v^+\|_1=\|v^-\|_1=1\right\}\label{e:la_2}
\end{align}
By standard compactness arguments as in \cite{kawohl-s:07} it is not difficult to see, that a minimizer of \eqref{e:la_2} exists.

Before we come to the next proposition recall the definition of $H_2$ in \eqref{e:infC} and $\Lambda_2$ in \eqref{e:infE} and their equality by Theorem~\ref{t:LN=HN}.

\begin{proposition}\label{p:4.1}
There holds
\[
\Lambda_2 = M_2\,.
\]
Moreover if $u=(u^1,u^2)$ is a Minimizer of \eqref{e:infE}, then $v=u^1-u^2$ is a minimizer of \eqref{e:la_2}. On the other hand if $v$ is a minimizer of \eqref{e:la_2}, then $u=(v^+,v^-)$ is a minimizer of \eqref{e:infE}.
\end{proposition}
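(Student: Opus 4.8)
The plan is to prove the two inequalities $\Lambda_2\le M_2$ and $M_2\le\Lambda_2$ by explicit transfer of competitors, and then read off the correspondence of minimizers as a byproduct. The guiding observation is that for $v\in BV(\Omega)$ one has $v=v^+-v^-$ with $v^+,v^-$ having disjoint supports (up to null sets), so the pair $(v^+,v^-)$ takes values in the skeleton $\Sigma\subset\R^2$; conversely if $u=(u^1,u^2)\in BV(\Omega,\Sigma)$ then $u^1,u^2\ge 0$ with $|\{u^1>0\}\cap\{u^2>0\}|=0$, hence $v:=u^1-u^2$ satisfies $v^+=u^1$ and $v^-=u^2$ a.e. So the map $v\mapsto(v^+,v^-)$ is a bijection (in the $L^1$ sense) between the admissible classes of \eqref{e:la_2} and \eqref{e:infE}, and it remains to check that it preserves the energy.

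First I would show $\cE_1(v)=\cE(v^+,v^-)$ for every $v\in BV(\Omega)$. The right-hand side is $|Dv^+|(\R^n)+|Dv^-|(\R^n)$ by definition \eqref{e:defcE} (the $\|\cdot\|_\star$ norm on $\R^{2\times n}$ collapses to a sum of the two row-norms). Now $v^+=\max(v,0)$ and $v^-=\max(-v,0)$ are $BV$ whenever $v$ is, and the key identity is $|Dv|(\R^n)=|Dv^+|(\R^n)+|Dv^-|(\R^n)$. I would justify this via the coarea formula applied in the form used in Proposition~\ref{p:1stEF1Lap}: with $E_t=\{v>t\}$ for $t>0$ and $E_t=\{v<t\}$ for $t<0$, one has $\int_{\R^n}\di|Dv|=\int_\R\Per(E_t)\di t=\int_0^\infty\Per(\{v^+>t\})\di t+\int_0^\infty\Per(\{v^->t\})\di t=|Dv^+|(\R^n)+|Dv^-|(\R^n)$, using that $\{v^+>t\}=\{v>t\}$ and $\{v^->t\}=\{v<-t\}$ for $t>0$. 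The same coarea/Cavalieri bookkeeping shows $\|v^+\|_1=\int_\R|\{v^+>t\}|\di t$, so the volume constraints $\|v^+\|_1=\|v^-\|_1=1$ match the constraints $\|u^i\|_1=1$ exactly. Since both the energy and the constraints are preserved under $v\mapsto(v^+,v^-)$ and its inverse $(u^1,u^2)\mapsto u^1-u^2$, taking infima over the two admissible classes yields $\Lambda_2=M_2$, and a minimizer on one side is carried to a minimizer on the other.

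The main obstacle is the identity $|Dv|(\R^n)=|Dv^+|(\R^n)+|Dv^-|(\R^n)$: it is intuitively clear (no cancellation of the gradient measures of the positive and negative parts since their supports are essentially disjoint) but it does require a clean argument, and the cleanest one is precisely the signed-superlevel-set coarea formula already invoked in the paper, so I would lean on that rather than on a direct measure-theoretic decomposition of $Dv$. A minor point to be careful about is the use of the decomposition \eqref{e:TV_decomp}: because we work in $BV(\Omega)$ via the zero-extension to $\R^n$, the boundary term $\int_{\bd\Omega}|v|\di\H^{n-1}$ is already absorbed into $\int_{\R^n}\di|Dv|$, and since $|v|=v^++v^-$ on $\bd\Omega$ with $v^+,v^-$ having disjoint traces, this term also splits additively and is consistent with the interior coarea computation; I would remark on this briefly but not dwell on it. Once $\Lambda_2=M_2$ is established the statement about minimizers is immediate from the fact that the correspondence is energy- and constraint-preserving in both directions.
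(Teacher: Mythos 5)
Your proposal is correct and follows essentially the same route as the paper: both proofs set up the correspondence $v\mapsto(v^+,v^-)$ and $(u^1,u^2)\mapsto u^1-u^2$ between the two admissible classes, using the essential disjointness of supports forced by $\Sigma$, and invoke the coarea formula to get $\cE_1(v)=\cE(v^+,v^-)$, from which $\Lambda_2=M_2$ and the transfer of minimizers follow. Your extra care in spelling out the identity $|Dv|(\R^n)=|Dv^+|(\R^n)+|Dv^-|(\R^n)$ via the signed-superlevel-set coarea formula is just a more detailed version of the step the paper summarizes as ``the coarea formula implies \eqref{e:E(u)=TV(v)}''.
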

\begin{proof}
If $u=(u^1,u^2)$ is an admissible function in \eqref{e:infE}, then (thanks to the fact that $u$ takes values in $\Sigma$ almost everywhere) the supports of $u^1$ and $u^2$ are essentially disjoint. Thus $v=u^1-u^2$ is a $BV(\Omega)$-function and the coarea forumla implies the relation
\begin{align}
\cE(u)=\cE_1(v)\,.\label{e:E(u)=TV(v)}
\end{align}
Moreover obviously $v^+=u^1$ and $v^-=u^2$, thus $\|v^+\|_1=\|v^-\|_1=1$, i.e. $v$ is an admissible function for \eqref{e:la_2}.

On the contrary if $v$ is an admissible function for \eqref{e:la_2}, then obviously $v^+$ and $v^-$ have essentially disjoint supports and thus $u=(v^+,v^-)$ turns out to be an admissble function for \eqref{e:infE} and again by the coarea formula \eqref{e:E(u)=TV(v)} holds. In particular we achieve $\Lambda_2=M_2$.
\end{proof}

Let us remark, that the constraints
\[
\|u^+\|_1=1\quad \text{ and }\quad \|u^-\|_1=1
\]
are equivalent to
\begin{align}
\cG_1(u):=\int_\Omega|u|\di x=2 \quad\text{and}\quad \cG_2(u):=\int_\Omega u \di x=0\,.\label{e:equivConstr}
\end{align}

\medskip

Recall that eigenfunctions (normalized with $\cG_1(u)=1$) of the $1$-Laplace operator are defined as critical points of the constraint variational problem
\[
\inf\{\cE_1(v)\setsep u\in BV(\Omega),\, \cG_1(v)=1\}
\]
and the corresponding critical values $\cE_1(v)$ are called eigenvalues of of the $1$-Laplace operator. Note that neither $\cE_1$ nor $\cG_1$ are differentiable, such that the concept of the weak slope is applied to define critical points in that context (cf. \cite{kawohl-s:07}, \cite{milbers-s:10}, \cite{chang:09}, \cite{milbers-s:12}). It has been shown in \cite{kawohl-s:07}  that eigenfunctions satisfy the following Euler-Lagrange equation. For any normalized eigenfunction $v$ there exists a vector field $z\in L^{\infty}(\Omega,\R^n)$ with
\[
|z|\le 1\quad \text{ and }\quad \Div z\in L^{n/(n-1)}(\Omega)\quad\text{ and }\quad \cE_1(v)=-\int_\Omega\Div z\, v \di x
\] and a function $s\in L^\infty(\Omega)$ with
\[
|s|\le1\quad\text{ and }\quad \cG_1(u) = \int_\Omega v s \di x 
\]
such that with $\lambda=\cE_1(v)$ the equation
\[
-\Div z = \lambda s
\]
holds. This equation gives sense to the formal Euler-Lagrange equation
\[
-\Div \frac{Dv}{|Dv|}=\lambda \frac{v}{|v|}\,.
\]
However, the Euler-Lagrange equation itself turned out to be inappropriate to define eigensolutions of the $1$-Laplace operator since it has too many solutions (that are not critical points in the sense of the weak slope), cf.~\cite{milbers-s:12}.

It is still an exciting ongoing question how to properly define or characterize higher eigenfunctions of the $1$-Laplace operator.
The most common way is in terms of a variational procedure and makes use of the genus as topological index. Recall that a symmetric set $S$ is said to be of genus $k\in\N$, denoted $\gen S=k$, provided there exists an odd continuous map $\Phi: S\to \R^k\setminus\{0\}$ and $k$ is the smallest possible integer with that property.

It is well known that 
\[
\lambda_{2,var}:=\inf_{S\in \cS_2}\sup_{v\in S}\cE_1(v)\,,
\]
where
\[
\cS_2:=\{S\subset L^1(\Omega)\setsep S=-S,\ S \text{ compact in $L^1(\Omega)$}, \gen S\ge 2, \cG_1=1 \text{ on } S\}
\]
is an eigenvalue of the $1$-Laplace operator \cite[Prop.~2.1]{littig-s:13}.

We will use our previous results to provide lower bounds on the value $\lambda_{2,var}$.

\medskip

Let now $s\in L^{\infty}(\Omega)$. We consider the minimization problem
\begin{align}
\lambda_{s}:= \inf\left\{\cE_1(v)\setsep v\in BV(\Omega), \cG_1(v)=1, \int_\Omega s v \di x =0\right\}\,.\label{e:la_s}
\end{align}

\begin{proposition}
For any $s\in L^\infty(\Omega)$ holds $\lambda_{s}\le \lambda_{2,var}$\,.
\end{proposition}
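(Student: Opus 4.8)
The plan is to exhibit, for every admissible symmetric set $S\in\cS_2$, a single point $v\in S$ whose energy $\cE_1(v)$ bounds $\lambda_s$ from above; taking the infimum over $S$ then gives the claim. The natural candidate comes from a Borsuk--Ulam / intermediate-value type argument: since $\gen S\ge 2$, there is no odd continuous map $S\to\R\setminus\{0\}=\R^1\setminus\{0\}$. First I would fix $s\in L^\infty(\Omega)$ and consider the functional $\Phi\colon S\to\R$ defined by $\Phi(v):=\int_\Omega s\,v\di x$. This map is continuous on $S$ with respect to the $L^1(\Omega)$-topology (because $s\in L^\infty$, so $v\mapsto\int s v$ is $L^1$-continuous) and it is odd, $\Phi(-v)=-\Phi(v)$. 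If $\Phi$ never vanished on $S$, then $v\mapsto\Phi(v)$ would be an odd continuous map $S\to\R\setminus\{0\}$, forcing $\gen S\le 1$, a contradiction. Hence there exists $v_0\in S$ with $\int_\Omega s\,v_0\di x=0$.

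Next I would check that this $v_0$ is admissible in the variational problem \eqref{e:la_s}. By definition of $\cS_2$ we have $\cG_1=1$ on $S$, so $\cG_1(v_0)=\int_\Omega|v_0|\di x=1$; and we have just arranged $\int_\Omega s v_0\di x=0$; so $v_0$ is a competitor in \eqref{e:la_s}. Consequently
\[
\lambda_s=\inf\Big\{\cE_1(v)\setsep v\in BV(\Omega),\ \cG_1(v)=1,\ \int_\Omega s v\di x=0\Big\}\le \cE_1(v_0)\le \sup_{v\in S}\cE_1(v)\,.
\]
Since $S\in\cS_2$ was arbitrary, taking the infimum over all such $S$ yields $\lambda_s\le\inf_{S\in\cS_2}\sup_{v\in S}\cE_1(v)=\lambda_{2,var}$, which is the assertion.

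One point that deserves care is that elements of $S$ lie a priori only in $L^1(\Omega)$ (that is how $\cS_2$ is defined), whereas \eqref{e:la_s} is an infimum over $BV(\Omega)$. I would address this by noting that $\sup_{v\in S}\cE_1(v)$ is only meaningful — and only gives a finite contribution to $\lambda_{2,var}$ — when it is finite, i.e. when every $v\in S$ has $\cE_1(v)=\int_{\R^n}\di|Dv|<+\infty$, so that $v\in BV(\Omega)$; for sets $S$ on which the supremum is $+\infty$ the inequality $\lambda_s\le\sup_{v\in S}\cE_1(v)$ is trivial. Thus the selected $v_0$ indeed lies in $BV(\Omega)$ whenever the bound is nontrivial. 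The only genuinely substantive ingredient is the genus obstruction described above; everything else is bookkeeping. The main obstacle, such as it is, is simply verifying that $v\mapsto\int_\Omega s v\di x$ is continuous on $S$ in the topology for which $S$ is compact and has genus at least $2$, which is immediate from $s\in L^\infty(\Omega)$ and Hölder's inequality.
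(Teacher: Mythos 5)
Your proposal is correct and follows essentially the same argument as the paper: the odd continuous functional $v\mapsto\int_\Omega s\,v\di x$ must vanish at some $v_0\in S$ since $\gen S\ge 2$, and this $v_0$ is admissible in \eqref{e:la_s}, giving $\lambda_s\le\sup_{v\in S}\cE_1(v)$; passing to the infimum over $S$ (the paper instead picks an $\varepsilon$-almost optimal $S$, a trivial variant) yields the claim. Your extra remark on the $L^1$ versus $BV$ bookkeeping is a harmless clarification of a point the paper leaves implicit.
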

\begin{proof}
%By standard compactness arguments it is easily seen that the infimum in \eqref{e:la_s} is achieved by some $u_s$.
Let $\varepsilon>0$ and let $S\subseteq L^1(\Omega)$ with $\gen S\ge 2$ such that $\sup_{v\in S}\cE_1(v)\le\lambda_{2,var}+\varepsilon$. We define an odd continuous functional $\Phi: S\to \R$ by $v \mapsto \int_\Omega v s \di x$. Since $\gen S\ge 2$ there has to be an element $v_0\in S$ with $\int_\Omega v_0 s \di x=0$ (otherwise we would have $\gen S=1$). Since $v_0$ is admissible in \eqref{e:la_s} we obtain
\[
\lambda_s\le \cE_1(v_0)\le \sup_{v\in S}\cE_1(v)\le\lambda_{2,var}+\varepsilon\,.
\]
\end{proof}

\medskip
\begin{corollary}\label{c:M2/2<la2}
There holds
\[
\frac{M_2}{2}\le \lambda_{2,var}\,.
\]
\end{corollary}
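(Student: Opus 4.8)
The plan is to invoke the preceding Proposition with the single specific choice $s\equiv 1$, which is admissible because $\Omega$ is bounded and hence $1\in L^\infty(\Omega)$. With this choice the linear constraint $\int_\Omega s v\di x=0$ appearing in \eqref{e:la_s} is exactly $\cG_2(v)=\int_\Omega v\di x=0$, so writing $\lambda_1$ for $\lambda_s$ with $s\equiv 1$ we have
\[
\lambda_1=\inf\left\{\cE_1(v)\setsep v\in BV(\Omega),\ \cG_1(v)=1,\ \cG_2(v)=0\right\},
\]
and the Proposition yields $\lambda_1\le\lambda_{2,var}$. So it suffices to prove $M_2/2=\lambda_1$.

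For this I would first rewrite $M_2$ using the constraint equivalence \eqref{e:equivConstr}: the conditions $\|v^+\|_1=\|v^-\|_1=1$ defining the admissible class in \eqref{e:la_2} are the same as $\cG_1(v)=2$ together with $\cG_2(v)=0$, hence
\[
M_2=\inf\left\{\cE_1(v)\setsep v\in BV(\Omega),\ \cG_1(v)=2,\ \cG_2(v)=0\right\}.
\]
Then I would use that $\cE_1$, $\cG_1$ and $\cG_2$ are all positively $1$-homogeneous. If $v$ is admissible for $M_2$, then $v/2$ is admissible for $\lambda_1$ and $\cE_1(v/2)=\tfrac12\cE_1(v)$, giving $\lambda_1\le M_2/2$; conversely, if $v$ is admissible for $\lambda_1$, then $2v$ is admissible for $M_2$ and $\cE_1(2v)=2\cE_1(v)$, giving $M_2\le 2\lambda_1$. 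Combining, $\lambda_1=M_2/2$, and therefore $M_2/2=\lambda_1\le\lambda_{2,var}$.

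I do not expect any real obstacle here, since the argument is just a rescaling combined with the already-established inequality $\lambda_s\le\lambda_{2,var}$. The only point deserving a word of care is that $\cG_2$ is a signed (genuinely $1$-homogeneous, not even) constraint, so one must scale by a positive factor rather than an arbitrary one; and the equivalence \eqref{e:equivConstr} is precisely what makes the passage between the "$L^1$-normalization of $v^\pm$" form of $M_2$ and the "$\cG_1=2$, $\cG_2=0$" form painless.
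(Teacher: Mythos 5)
Your argument is correct and is exactly the paper's proof: the authors likewise deduce the corollary from the constraint equivalence \eqref{e:equivConstr}, the $1$-homogeneity of $\cE_1$ and $\cG_1$, and the preceding proposition applied with $s\equiv 1$; you merely spell out the rescaling $v\mapsto v/2$ that the paper leaves implicit. (Your cautionary remark about scaling only by positive factors is harmless but unnecessary, since $\cG_2$ is linear and the zero constraint is preserved under any scaling.)
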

\begin{proof}
This is a direct consequence of \eqref{e:equivConstr}, the $1$-homogeneity of $\cG_1$ and $\cE_1$ and the previous proposition applied with $s=1$.
\end{proof}

\begin{remark}\label{r:4.4}
Let us remark that E.~Parini in \cite{Pa09} studied the problem
\[
h_2:=\inf \left\{\max\left\{\frac{\Per (E_1)}{|E_1|},\frac{\Per (E_2)}{|E_2|}\right\}\setsep E_1, E_2\subseteq\Omega, |E_1|\,|E_2|>0 \text{ and } |E_1\cap E_2|=0\right\}\,.
\]
He proved, that the second variational eigenvalue of the $p$-Laplace operator converges as $p\to 1$ to $h_2$. Obviously $h_2\ge \frac{H_2}{2}$ and thus the statement of Corollary~\ref{c:M2/2<la2} can be recovered also from the convergence of the eigenvalues of the $p$-Laplacian to the eigenvalues of the $1$-Laplacian as $p\to 1$(cf.~\cite{littig-s:13}).
\end{remark}

\medskip

Let in the following $\rS^1$ denote the boundary of the unit disc in $\R^2$.
\begin{corollary}\label{c:sup_le_La_2}
If there exists an odd continuous map $\Phi: \rS^1\to L^1(\Omega)$  with $\|\Phi(x)\|_1=1$ for all $x\in\rS^1$ and with
\[
\sup_{x\in \rS^1} \cE_1(\Phi(x))\le \frac{M_2}{2}\,,
\]
then
\[
\frac{M_2}{2}= \lambda_{2,var}\,.
\]
\end{corollary}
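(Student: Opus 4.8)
The plan is to combine the hypothesis with Corollary \ref{c:M2/2<la2}, which already gives the inequality $M_2/2 \le \lambda_{2,var}$; it therefore suffices to establish the reverse inequality $\lambda_{2,var} \le M_2/2$. To do this I would exhibit, for the given odd continuous map $\Phi\colon \rS^1 \to L^1(\Omega)$, a competitor set $S$ admissible in the definition of $\lambda_{2,var}$, i.e. a symmetric compact subset of $L^1(\Omega)$ with $\gen S \ge 2$ on which $\cG_1 = 1$, and whose sup-energy is controlled by $\sup_{x \in \rS^1} \cE_1(\Phi(x)) \le M_2/2$.

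First I would set $S := \Phi(\rS^1)$. This set is compact in $L^1(\Omega)$ as the continuous image of the compact set $\rS^1$; it is symmetric because $\Phi$ is odd, so $-S = \Phi(-\rS^1) = \Phi(\rS^1) = S$; and the normalization hypothesis $\|\Phi(x)\|_1 = 1$ for all $x$ means precisely that $\cG_1 \equiv 1$ on $S$. The one genuinely delicate point is verifying $\gen S \ge 2$: one must rule out the existence of an odd continuous map $S \to \R \setminus \{0\}$. Here I would argue that composing such a map with $\Phi$ would yield an odd continuous map $\rS^1 \to \R \setminus \{0\}$, which is impossible by the Borsuk--Ulam theorem (equivalently, $\gen \rS^1 = 2$ and genus cannot increase under odd continuous maps, so $\gen \Phi(\rS^1)$ is at least $\gen \rS^1 = 2$ provided $\Phi$ is, say, injective; in general one invokes the standard fact that if there is an odd continuous map from $\rS^1$ into $S$ then $\gen S \ge \gen \rS^1 = 2$). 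This is the main obstacle, and it is essentially the monotonicity property of the genus under odd maps — a standard fact I would cite from the Lusternik--Schnirelmann theory literature already referenced in the paper (e.g. \cite{littig-s:13}).

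Once $S$ is known to be admissible for $\lambda_{2,var}$, the definition gives immediately
\[
\lambda_{2,var} = \inf_{S' \in \cS_2} \sup_{v \in S'} \cE_1(v) \le \sup_{v \in S} \cE_1(v) = \sup_{x \in \rS^1} \cE_1(\Phi(x)) \le \frac{M_2}{2}\,.
\]
Combining this with the inequality $M_2/2 \le \lambda_{2,var}$ from Corollary \ref{c:M2/2<la2} yields the claimed equality $M_2/2 = \lambda_{2,var}$, completing the proof. The only nonroutine ingredient is the genus lower bound, and everything else — compactness, symmetry, the normalization constraint — is immediate from the hypotheses on $\Phi$.
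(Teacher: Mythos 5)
Your proposal is correct and takes essentially the same route as the paper: set $S=\Phi(\rS^1)$, check it is admissible in $\cS_2$ (the paper leaves the genus bound as an ``easy exercise,'' which you fill in via Borsuk--Ulam, equivalently monotonicity of the genus under odd continuous maps), deduce $\lambda_{2,var}\le \sup_{v\in S}\cE_1(v)\le M_2/2$, and combine with Corollary~\ref{c:M2/2<la2} for the reverse inequality.
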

\begin{proof}
It is an easy exercise to show that $S:= \Phi(\rS^1)$ has category two. Thus
\[
\lambda_{2,var}\le\sup_{v\in S}\cE_1(v)\le \frac{M_2}{2}\,.
\]
The reverse inequality follows from the previous corollary.
\end{proof}
After an explicit calculation of the geometric constant $\frac{\Lambda_2}{2}$ such a closed curve as in the previous corollary might be calculated explicitly, depending on the geometry of $\Omega$. This yields a way to calculate the second eigenvalue of the $1$-Laplace operator.

\section{Examples}
\label{s:examples}

Consider the rather easy example of a barbell domain in $\R^2$ with non-unique Cheeger sets, say e.\,g.\
\[
\Omega= [0,1]\times [0,1] \ \cup\ [1,2]\times [0,\varepsilon]\  \cup\  [2,3]\times [0,1]
\]
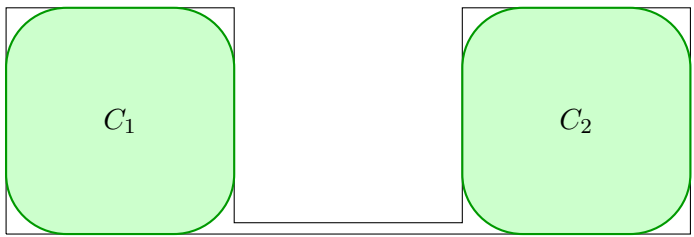
\begin{figure}
\begin{tikzpicture}[scale=3]
%%%%Shape of Omega
\draw(0,0)--(3,0)--(3,1)--(2,1)--(2,.05)--(1,.05)--(1,1)--(0,1)--(0,0);

%Shape and Boundary of C_1
\fill[color=green!20!white](0,.265) arc(180:270:.265)--(0.735,0) arc(270:360:.265)-- (1,.735) arc(0:90:.265)--(.265,1) arc(90:180:.265)-- cycle;

\draw[color=green!60!black,thick] (0,.265) arc(180:270:.265)--(0.735,0) arc(270:360:.265)-- (1,.735) arc(0:90:.265)--(.265,1) arc(90:180:.265)-- cycle;

%Label of C_1
\draw (.5,.5) node{$C_1$};

%Shape and boundary of C_2

\fill[color=green!20!white] (2,.265) arc(180:270:.265)--(2.735,0) arc(270:360:.265)-- (3,.735) arc(0:90:.265)--(2.265,1) arc(90:180:.265)-- cycle;

\draw[color=green!60!black,thick] (2,.265) arc(180:270:.265)--(2.735,0) arc(270:360:.265)-- (3,.735) arc(0:90:.265)--(2.265,1) arc(90:180:.265)-- cycle;

%Label of C_2
\draw (2.5,.5) node{$C_2$};

\end{tikzpicture}%
\caption{Non-unique Cheeger sets of a barbell domain}
\end{figure}
with $\varepsilon>0$ sufficiently small. It is well known, that $C_1$, the Cheeger set of $[0,1]^2$ as well as $C_2:=(2,0)+C_1$, the Cheeger set of $[2,3]\times [0,1]$, and their union $C_1\cup C_2$ are Cheeger sets of $\Omega$. According to Proposition~\ref{p:1stEF1Lap} any function
\[
u_{\alpha,\beta}=\alpha \ca_{C_1} + \beta \ca_{C_2}\,,
\]
with $\alpha\neq 0$ or $\beta\neq 0$ is a minimizer of \eqref{e:VP_1Lap}. In particular in contrast to the first eigenfunction of the $p$-Laplace operator the first eigenfunction of the $1$-Laplace operator may change sign in $\Omega$ and is not unique in general.

Moreover we have $\Lambda_1=\frac{\Lambda_2}{2}=\lambda_{2,var}$ in that case. This follows from the fact that $(C_1,C_2)$ is obviously a Cheeger-$2$-cluster for $\Omega$ and thus $\Lambda_1=\frac{\Lambda_2}{2}$. It remains to show $\lambda_{2,var}\le \frac{\Lambda_2}{2}$. This will follow from Proposition~\ref{p:4.1} and Corollary~\ref{c:sup_le_La_2}. In fact the set
\[
S:=\{u_{\alpha,\beta}\setsep \alpha,\beta\in\R,\ |\alpha||C_1|+|\beta||C_2|=1\}
\]
is easily seen to be homeomorphic to $\rS^1$ and $\|u_{\alpha,\beta}\|_1=1$ for all $u_{\alpha,\beta}\in S$. It remains to estimate the energy
\begin{align*}
\cE_1(u_{\alpha,\beta})&\le \cE_1(\alpha\ca_{C_1})+\cE_1(\beta\ca_{C_2})\\
&=|\alpha|\cE_1(\ca_{C_1})+|\beta|\cE_1(\ca_{C_2})\\
&=|\alpha|\Per(C_1) + |\beta| \Per(C_2)\\
&=|\alpha|\frac{\Lambda_2}{2 |C_1|} + |\beta|\frac{\Lambda_2}{2 |C_2|}=\frac{\Lambda_2}{2}\,.
\end{align*}

Note that the foregoing calculations apply in any situation where we have a Cheeger-$2$-cluster $(C_1,C_2)$ with $\frac{\Per(C_1)}{|C_1|}=\frac{\Per(C_2)}{|C_2|}$. Recalling the results from Section~\ref{s:N=2} this directly gives the following Theorem.

\begin{theorem}
If there is a Cheeger-$2$-cluster $(C_1,C_2)$ of $\Omega$ satisfying
\[
\frac{\Per(C_1)}{|C_1|}=\frac{\Per(C_2)}{|C_2|}\,,
\]
then the second variational eigenvalue $\lambda_{2,var}$ of the $1$-Laplace operator is characterized by the following equalities
\[
\lambda_{2,var}=\frac{M_2}{2}=\frac{\Lambda_2}{2}=\frac{H_2}{2}=\frac{\Per(C_1)}{|C_1|}\,.
\]
\end{theorem}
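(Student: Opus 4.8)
The plan is to assemble the stated chain of equalities from the pieces already proved in the excerpt, filling in only the one geometric computation that is genuinely new. Concretely, I would argue as follows.

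First, recall that by hypothesis $(C_1,C_2)$ is a Cheeger-$2$-cluster, so by definition $H_2=\frac{\Per(C_1)}{|C_1|}+\frac{\Per(C_2)}{|C_2|}$. Since the two ratios are assumed equal, each equals $\frac{H_2}{2}$, giving the rightmost equality $\frac{H_2}{2}=\frac{\Per(C_1)}{|C_1|}$. By Theorem~\ref{t:LN=HN} we have $\Lambda_2=H_2$, hence $\frac{\Lambda_2}{2}=\frac{H_2}{2}$, and by Proposition~\ref{p:4.1} we have $M_2=\Lambda_2$, hence $\frac{M_2}{2}=\frac{\Lambda_2}{2}$. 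Thus the only inequality that remains is $\lambda_{2,var}=\frac{M_2}{2}$, and for that it suffices by Corollary~\ref{c:M2/2<la2} (which gives $\frac{M_2}{2}\le\lambda_{2,var}$) to produce the reverse inequality.

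The reverse inequality I would get from Corollary~\ref{c:sup_le_La_2}: I need an odd continuous map $\Phi:\rS^1\to L^1(\Omega)$ with $\|\Phi(x)\|_1=1$ everywhere and $\sup_{x\in\rS^1}\cE_1(\Phi(x))\le\frac{M_2}{2}$. Following verbatim the computation carried out just above the theorem in the barbell example, I would set $\Phi$ to parametrize the set $S:=\{\alpha\ca_{C_1}+\beta\ca_{C_2}\setsep |\alpha||C_1|+|\beta||C_2|=1\}$, which is homeomorphic to $\rS^1$ via an odd homeomorphism and satisfies $\|\alpha\ca_{C_1}+\beta\ca_{C_2}\|_1=|\alpha||C_1|+|\beta||C_2|=1$ because $C_1$ and $C_2$ are disjoint. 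Then, using subadditivity of $\cE_1$, its $1$-homogeneity, $\cE_1(\ca_{C_i})=\Per(C_i)=\frac{H_2}{2}|C_i|=\frac{M_2}{2}|C_i|$ (using $M_2=H_2$ and the equal-ratio hypothesis), one computes
\[
\cE_1(\alpha\ca_{C_1}+\beta\ca_{C_2})\le |\alpha|\Per(C_1)+|\beta|\Per(C_2)=\frac{M_2}{2}\big(|\alpha||C_1|+|\beta||C_2|\big)=\frac{M_2}{2},
\]
so $\sup_{x\in\rS^1}\cE_1(\Phi(x))\le\frac{M_2}{2}$. Corollary~\ref{c:sup_le_La_2} then yields $\frac{M_2}{2}=\lambda_{2,var}$, completing the chain.

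I do not expect a serious obstacle here: every ingredient is already in place, and the "new" content is exactly the displayed estimate, which is elementary once one knows $C_1,C_2$ are essentially disjoint (so that $\ca_{C_1}$ and $\ca_{C_2}$ have disjoint supports and the coarea formula gives $\cE_1(\alpha\ca_{C_1}+\beta\ca_{C_2})=\cE_1(\alpha\ca_{C_1})+\cE_1(\beta\ca_{C_2})$, making the subadditivity step in fact an equality). The only point requiring a word of care is the claim that $S$ is the image of an \emph{odd} continuous (indeed homeomorphic) map from $\rS^1$: one checks that $(\alpha,\beta)\mapsto \alpha\ca_{C_1}+\beta\ca_{C_2}$ restricted to the "diamond" $\{|\alpha||C_1|+|\beta||C_2|=1\}$ is injective, continuous, odd, and has compact domain, hence is a homeomorphism onto $S$, and the diamond is itself oddly homeomorphic to $\rS^1$. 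After that, invoking the corollaries from Section~\ref{s:N=2} closes the argument.
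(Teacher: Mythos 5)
Your proposal is correct and follows essentially the same route as the paper: the paper proves this theorem precisely by observing that the barbell computation (the family $S=\{\alpha\ca_{C_1}+\beta\ca_{C_2}\setsep |\alpha||C_1|+|\beta||C_2|=1\}$ homeomorphic to $\rS^1$, the energy estimate via subadditivity and $1$-homogeneity of $\cE_1$, and then Corollary~\ref{c:sup_le_La_2} combined with Corollary~\ref{c:M2/2<la2}, Theorem~\ref{t:LN=HN} and Proposition~\ref{p:4.1}) applies verbatim whenever the two Cheeger ratios coincide. The only quibble is your parenthetical claim that the subadditivity step is in fact an equality by the coarea formula: this can fail when the boundaries of $C_1$ and $C_2$ touch, but since only the inequality $\le$ is needed, this does not affect the argument.
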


According to Remark~\ref{r:4.4} we have $\lambda_{2,var}=h_2$ and obviously $h_2=\frac{H_2}{2}$ if and only if $\frac{\Per(C_1)}{|C_1|}=\frac{\Per(C_2)}{|C_2|}$ for at least one Cheeger-$2$-cluster of $\Omega$. There are not many specific Cheeger-$2$-clusters studied, yet. However, it is quite reasonable that the Cheeger-$2$-cluster $(C_1,C_2)$ of a slightly non-symmetric barbell domain
\[
\Omega= [0,1]\times [0,1] \,\cup\,[1,2]\times [0,\varepsilon]\, \cup\, [2,3-\delta]\times [0,1-\delta]
\]
\begin{figure}
\begin{tikzpicture}[scale=3]
%%%%Shape of Omega
\draw(0,0)--(2.8,0)--(2.8,.8)--(2,.8)--(2,.05)--(1,.05)--(1,1)--(0,1)--(0,0);

%Shape and Boundary of C_1
\fill[color=green!20!white](0,.265) arc(180:270:.265)--(0.735,0) arc(270:360:.265)-- (1,.735) arc(0:90:.265)--(.265,1) arc(90:180:.265)-- cycle;

\draw[color=green!60!black,thick] (0,.265) arc(180:270:.265)--(0.735,0) arc(270:360:.265)-- (1,.735) arc(0:90:.265)--(.265,1) arc(90:180:.265)-- cycle;

%Label of C_1
\draw (.5,.5) node{$C_1$};

%Shape and boundary of C_2

\fill[color=green!20!white] (2,.212) arc(180:270:.212)--(2.588,0) arc(270:360:.212)-- (2.8,.588) arc(0:90:.212)--(2.212,.8) arc(90:180:.212)-- cycle;

\draw[color=green!60!black,thick] (2,.212) arc(180:270:.212)--(2.588,0) arc(270:360:.212)-- (2.8,.588) arc(0:90:.212)--(2.212,.8) arc(90:180:.212)-- cycle;

%Label of C_2
\draw (2.4,.4) node{$C_2$};

\end{tikzpicture}%
\caption{Non-symmetric barbell domain with $\lambda_{2,var}>\frac{\Lambda_2}{2}$.}
\end{figure}
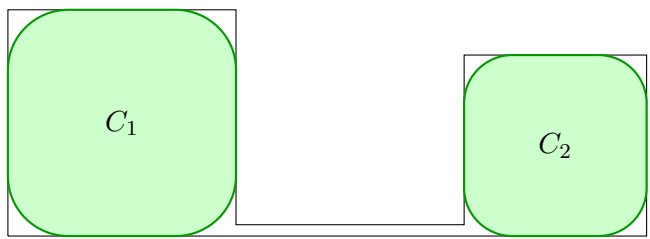
for a certain sufficiently small $\delta>0$ and an $0<\varepsilon\ll\delta$ will be given by 
$C_1$ the Cheeger set of $[0,1]\times [0,1]$ and $C_2$ the Cheeger set of $[2,3-\delta]\times [0,1-\delta]$. By the scaling properties of the Cheeger quotient we have
\[
\frac{\Per(C_1) }{|C_1|}<\frac{\Per(C_2) }{|C_2|}
\]
so
\[
\lambda_{2,var}=h_2>\frac{H_2}{2}=\frac{M_2}{2}=\frac{\Lambda_2}{2}
\]
In particular
\[
\inf\left\{\cE_1(v)\setsep \int_\Omega |v|\di x=1,\ \int_\Omega v\di x=0\right\}
\]
provides only a strict lower bound on the second variational eigenvalue $\lambda_{2,var}$ of the $1$-Laplace operator. 

\subsection*{Acknowledgement}
The work of Marco Caroccia  was supported by the Fundação para a Ciência e a Tecnologia (Portuguese Foundation for Science and Technology) through the Carnegie Mellon\textbackslash Portugal Program under Grant 18316.1.5004440. Marco Caroccia would also like to acknowledge for the hospitality the Universität zu Köln where part of this work has been developed.

%\bibliography{reference}
%\bibliographystyle{alpha}

\addcontentsline{toc}{chapter}{Bibliography} 

\end{document}